\newtheorem{theorem}{Theorem}[section]
\newtheorem{remark}[theorem]{Remark}
\newtheorem{definition}{Definition}[section]
\theoremstyle{definition}
\newcommand{\beql}[1]{\begin{equation}\label{#1}}
\newcommand{\eeq}{\end{equation}}
\newcommand{\comment}[1]{}
\newcommand{\Abs}[1]{{\left|{#1}\right|}}
\newcommand{\Set}[1]{{\left\{{#1}\right\}}}
\newcommand{\RR}{{\mathbb R}}
\newcommand{\CC}{{\mathbb C}}
\newcommand{\ZZ}{{\mathbb Z}}
\newcommand{\NN}{{\mathbb N}}
\newcommand{\ft}[1]{\widehat{#1}}
\newcommand{\lcm}{{\rm lcm\,}}
\newcommand{\diam}{{\rm diam\,}}
\newcounter{rem}
\newcounter{step}
\newcommand{\step}[1]{\ \par \noindent\refstepcounter{step}{\bf \thestep.} #1}
\begin{document}

\title{Algorithms for translational tiling}

\author{Mihail N. Kolountzakis \& M\'at\'e Matolcsi}

\date{October 2008}

\address{M.K.: Department of Mathematics, University of Crete, Knossos Ave.,
GR-714 09, Iraklio, Greece} \email{kolount@gmail.com}

\address{M.M.: Alfr\'ed R\'enyi Institute of Mathematics,
Hungarian Academy of Sciences POB 127 H-1364 Budapest, Hungary.
\newline
 (also at BME Department of Analysis, Budapest,
H-1111, Egry J. u. 1)}\email{matomate@renyi.hu}

\thanks{MK: Supported by research grant No 2569 from the Univ.\ of Crete.}
\thanks{MM: Supported by Hungarian National Foundation for Scientific Research
(OTKA), Grants No. PF-64061, T-049301, T-047276}

\begin{abstract}
In this paper we study algorithms for tiling problems. We show
that the conditions $(T1)$ and $(T2)$ of Coven and Meyerowitz
\cite{coven-meyerowitz}, conjectured to be necessary and
sufficient for a finite set $A$ to tile the integers, can be
checked in time polynomial in $\mbox{diam}(A)$. We also give
heuristic algorithms to find all non-periodic tilings of a cyclic
group $\ZZ_N$. In particular we carry out a full classification of
all non-periodic tilings of $\ZZ_{144}$.

\end{abstract}

\maketitle

{\bf 2000 Mathematics Subject Classification:} 05B45, 43A25, 68W30, 68T20.

{\bf Keywords and phrases.} {\it Translational tiles, algorithms}

\section{Translational tiling}\label{sec:intro}

\begin{definition}
We say that a subset $A$ of an abelian group $G$ (written
additively)  {\em tiles $G$ by translation} when there exists a
subset $B \subset G$ such that every $g \in G$ has a unique
representation
$$
g = a + b,\ \ \ (a\in A, b\in B).
$$
This situation is denoted by $G = A\oplus B$ or $G = B\oplus A$.
The sets $A$ and $B$ will be called {\em tiling complements} of
each other.
\end{definition}

\begin{remark}
In much of the bibliography, especially the earliest one and that
with a more algebraic focus, the tiling condition $G = A \oplus B$
is often called a {\em factorization} of the group $G$.
\end{remark}

In this paper we are primarily interested in the case when $G$ is a finite
group, typically a cyclic group $\ZZ_N = \ZZ / (N\ZZ)$, or the group $\ZZ^d$, $d \ge 1$.

\subsection{Periodicity}\label{sec:periodicity}

\begin{definition}
A set $B \subseteq G$ is called {\em periodic} if there exists
a nonzero $g \in G$ such that $B + g = B$. Such a $g$ is then called
a {\em period} of $B$ and clearly the set of all periods plus $0$ forms
a subgroup of $G$.

In case $B\subseteq\ZZ$ is a periodic set of integers,
its least positive period is denoted by ${\mathcal P}(B)$.
\end{definition}

A periodic set $B$ is clearly a union of congruence classes $\bmod\ {\mathcal P}(B)$
and we can write
$$
B = \widetilde{B} \oplus {\mathcal P}(B)\ZZ,
$$
where $\widetilde{B} \subseteq [{\mathcal P}(B)\ZZ]$ and we have used the notation
$$
[n] = \Set{0,1,\ldots,n-1}.
$$
It has long been known \cite{newman}
that if $A$ is a finite set of integers and $\ZZ=A\oplus B$ is a tiling
of the integers then the set $B \subseteq \ZZ$ is necessarily periodic.
A consequence of this is that the study of any such tiling with period, say, $N={\mathcal P}(B)$,
reduces to the study  of a tiling of the group $\ZZ_N$, namely the tiling
$\ZZ_N = A\oplus \widetilde{B}$.

Non-periodic tilings of a group $G=A\oplus B$ are in some sense
less structured and more interesting. Indeed, if $h \in
G\setminus\Set{0}$ is a period of $B$ and $H \le G$ is the
subgroup generated by $h$, then we can write $B = B' \oplus H$ for
some $B' \subseteq G/H$ (slight, harmless abuse of language here).
The tiling condition $G = A \oplus B$ then becomes equivalent to
the tiling condition $G/H = A \oplus B'$. This means that
periodicity implies an immediate reduction to the complexity of
the problem. Haj\'os \cite{hajos} called a finite abelian group
$G$ a {\em good group} if in any tiling $G = A \oplus B$ one of
the two sets $A$ and $B$ must be periodic. Sands
\cite{sands-1,sands-2} completed the classification of all finite
abelian groups into good and bad groups (for the entire list see
also \cite[Prop.\ 4.1]{lagarias-wang}). The smallest cyclic group
that is not good is $\ZZ_{72}$.

\subsection{Tiling in Fourier space. Cyclotomic polynomials}\label{sec:tiling-in-fourier-space}

The condition $G=A \oplus B$ is clearly equivalent to
\beql{tiling}
\sum_{g \in G} \chi_A(g) \chi_B(x-g) = 1,\ \ (x \in G),
\eeq
which we can rewrite as $\chi_A * \chi_B = 1$, with $*$ denoting the convolution of two functions.

Assume for simplicity that $G$ is finite from now on.
For a function $f:G \to \CC$ its Fourier Transform \cite{rudin} is defined as the function $\ft{f}:\Gamma \to \CC$
given by
$$
\ft{f}(\gamma) = \sum_{g \in G} f(g) \overline{\gamma(g)},
$$
$\Gamma$ being the {\em dual group} of $G$. It consists of all characters $\gamma$ which are the group
homomorphisms $G \to \CC$.
It is easy to see that $\ft{f*g} = \ft{f}\cdot\ft{g}$.
Using this our tiling condition \eqref{tiling} can be rewritten as
$$
\ft{\chi_A} \cdot \ft{\chi_B} = \ft{1} = \Abs{G} \chi_{\Set{0}}.
$$
This is in turn equivalent to the obvious condition $\Abs{A}\cdot\Abs{B} = \Abs{G}$ as well as
\beql{zeros}
Z(\ft{\chi_A}) \cup Z(\ft{\chi_B}) = G \setminus \Set{0},
\eeq
where $Z(f) = \Set{x: f(x) = 0}$ is the zero set of $f$.
So, to check that two sets $A, B \subset G$ are tiling complements we have to first check the obvious
condition $\Abs{A}\cdot\Abs{B} = \Abs{G}$ and verify that the zero sets cover everything but $0$.

In the particular case where $G = \ZZ_N$ is a cyclic group, we have that $\Gamma$ is isomorphic to $G$
and the FT of $f:G \to \CC$ now becomes the function on $\ZZ_N$
$$
\ft{f}(k) = \sum_{j=0}^{N-1} f(j) e^{-2\pi i kj/N},\ \ (k = 0, 1, \ldots, N-1).
$$
Introducing the polynomial $P(X) = \sum_{j=0}^{N-1} f(j) X^j$ it is often convenient to
view the FT $\ft{f}$ as the evaluation of $P(X)$ at the $N$-th roots of unity
$$
1, e^{2\pi i \frac{1}{N}}, e^{-2\pi i \frac{2}{N}}, \ldots, e^{-2\pi i \frac{N-1}{N}}.
$$
It is the roots of this polynomial $P(X)$ that matter when one
checks condition \eqref{zeros}.  And in the case where $f =
\chi_A$ is the indicator function of a set $A \subseteq \ZZ_N$ the
polynomial $P(X)$ has integer coefficients. The irreducible (over
$\ZZ$) factors of $P(X)$ which are responsible for the zeros of
$P(X)$ at $N$-th roots of unity are {\em cyclotomic polynomials},
which are defined as the minimal polynomials of roots of unity
(see \cite{coven-meyerowitz} for a brief introduction to the
cyclotomic polynomials).

A quick way to define the cyclotomic polynomials $\Phi_n(X)$ is by the decomposition
of $X^N-1$ into irreducible factors
$$
X^N-1 = \prod_{d | N} \Phi_d(X),
$$
where the product extends over all divisors of $N$, $1$ and $N$ included.

The irreducible monic polynomial $\Phi_n(X)$ has as roots
precisely the primitive $n$-th roots of  unity, which are
therefore algebraic conjugates of each other
$$
\Phi_n(X) = \prod_{0 \le k <n,\ (k,n)=1} (X-e^{2\pi i k/n}).
$$
As a consequence, whenever an integer polynomial vanishes on a
primitive $n$-th root of unity it  vanishes on all of them. When
vanishing of such a polynomial is the issue, therefore, the set of
$N$-th roots of unity is split into $d(N)$ (= number of divisors
of $N$) blocks, and the zero set of such polynomial $P(X)$ among
the roots of unity is a union of such {\em cyclotomic blocks}.

\begin{remark}
Let us also observe that periodicity of a set $A \subseteq \ZZ_N$
can easily be detected on the Fourier side. Indeed, it is easy to
see that, if $N = ab$, the set $A \subseteq \ZZ_N$ is $a$-periodic
if and only if its Fourier transform $\ft{\chi_A}(k)$ vanishes on
all $k \in \Set{0,1,\ldots,N-1}$ which are not multiples of $b$.
\end{remark}

\subsection{The Coven-Meyerowitz conditions}\label{sec:cm-conditions}

Suppose $A$ is a finite set of nonnegative integers (we assume $0 \in A$) and write, as is customary,
$$
A(X) = \sum_{a \in A} X^a.
$$
Let $S_A$ be the set of prime powers $p^a$ such that $\Phi_{p^a}(X) \ |\  A(X)$.
In \cite{coven-meyerowitz} Coven and Meyerowitz wrote down the following two conditions on a such a polynomial $A(X)$.
\begin{itemize}
 \item[($T_1$)] $A(1) = \prod_{s \in S_A} \Phi_s(1)$,
 \item[($T_2$)] If $s_1, \ldots, s_m \in S_A$ are powers of distinct primes then
$\Phi_{s_1\cdots s_m}(X)\ |\ A(X)$.
\end{itemize}
They proved in \cite{coven-meyerowitz} that if ($T_1$) and ($T_2$)
hold for a set $A$ then $A$ tiles the integers by translation.
Another way to say this is that there is a number $N$ such that
$A$ tiles $\ZZ_N$. In the converse direction they proved that
($T_1$) necessarily holds for any tile $A$. Regarding the
necessity of ($T_2$) for tiling it was proved in
\cite{coven-meyerowitz}  that ($T_2$) is also necessary for tiling
when $\Abs{A}$ has at most two different prime factors. It was
conjectured by Konyagin and \L aba \cite{konyagin-laba} that $A$
is a tile of the integers if and only if both ($T_1$) and ($T_2$)
hold.

\subsection{The computational status of tiling}\label{sec:status}
The most basic  computational problem of tiling is to decide
whether a given finite set $A$ in an abelian group $G$ tiles the
group. If one ignores, as a first approach, questions of
complexity and restricts oneself to questions of decidability, one
must assume $G$ to be infinite (and, of course, discrete) for the
problem to be meaningful.

In a more general  form of the problem, that of asking whether a
given {\em set of tiles} can be moved around (by a group of
motions) to tile $\RR^d$, tiling has long been shown to be
undecidable. Berger \cite{berger} first showed this (it is
undecidable to determine if a given finite set of polygons can
tile $\RR^2$ using rigid motions). Many other models of tiling
have been shown to undecidable.

It is the case  of tiling by translations and by a single tile
that interests us here.

When the group  $G$ is the group of integers $\ZZ$ the problem is
decidable. This nontrivial fact follows from Newman's result
\cite{newman} that every translational tiling of $\ZZ$ by a finite
set is periodic. Although a bound for the period
($2^{\diam A}$) is given by Newman's theorem, this is not
strictly necessary to deduce the mere existence of an algorithm to
decide tiling by $A$ (if, that is, we do not care about the
running time of the algorithm). Indeed, given the finite $A
\subset \ZZ$, all we have to do is to start examining, by
exhaustive search, whether $A$ can tile the set
$\Set{-N,\ldots,N}$ for larger and larger $N$. Tiling here means
that we want to find a collection of non-overlapping translates of
$A$ which will cover $\Set{-N,\ldots,N}$. If the set $A$ cannot
tile $\ZZ$ then there is a finite $N$ for which we will not be
able to tile $\Set{-N,\ldots,N}$. This is a simple compactness (or
diagonalization) argument. On the other hand, if $A$ does tile
$\ZZ$ then, for large enough $N$, we will observe a tiling of
$\Set{-N,\ldots,N}$ which is ``periodic'' and which can,
therefore, be extended indefinitely to the left and right to tile
$\ZZ$.

The argument  using periodicity given above is very general (see
the introduction of \cite{robinson}, where it is also proved
that if a set admits a tiling of the plane with one period then
it also admits a fully periodic tiling), works
in all dimensions, and it is enough that periodic tilings exist.
It is not necessary for all tilings to be periodic (which fails to
be the case even in dimension 2 for as simple a tile as a
rectangle). Thus, the so-called {\em periodic tiling conjecture}
\cite{grunbaum-shepard,lagarias-wang-1d} ({\em everything that
tiles by translation can also tile periodically}) implies
decidability of translational tiling. This conjecture is still
open in all dimensions $d \ge 2$.

Already  when the group is $\ZZ^2$ the question of deciding if a
given finite set $A \subset \ZZ^2$ can tile by translation is wide
open, apart from the result of Szegedy \cite{szegedy} who gave an
algorithm for the special cases of $\Abs{A}$ being a prime or 4.
There are also algorithms for other special cases but these all
have topological conditions
\cite{wijshoff-van-leeuwen,girault-beauquier-nivat} on the tile
(e.g.\ to be simply connected).

Let  us now restrict ourselves to the decision problem of deciding
whether a given $A \subseteq \ZZ_N$ can tile $\ZZ_N$ by
translation. We are interested to study the computational
complexity of this problem regarding $N$ as the parameter. In
particular we'd like to have an algorithm which runs in time
$O(N^c)$ for some fixed $c>0$. Such an algorithm is still lacking
though except when some arithmetic conditions on $\Abs{A}$ are
imposed. In this paper we prove that if $\Abs{A}$ has at most two
prime factors then we can decide if $A$ is a tile of $\ZZ$ in
polynomial time. This is so because we can decide the
Coven-Meyerowitz conditions ($T_1$) and ($T_2$) (see
\S\ref{sec:cm-conditions}) in polynomial time (Theorem
\ref{th:cm-in-polynomial-time} below). We also introduce a local
version of the Coven-Meyerowitz conditions in \S\ref{sec:local}
which will allow us to give a similar algorithm, under the same
arithmetic conditions, for when $A$ tiles $\ZZ_N$.

On  the other hand, some very similar problems to the decision of
tiling (problem DIFF in \cite{kolountzakis-matolcsi}) have been
shown to be NP-complete. This would suggest a lower bound in the
computational complexity of the tiling decision problem. Were this
decision problem to prove to be NP complete this would refute the
equivalence of ($T_1$ \& $T_2$) to tiling (conjectured in \cite{konyagin-laba}),
assuming of course P$\neq$NP.

In the last section of the paper we turn to the problem of finding
all {\it non-periodic} tilings of a cyclic group $\ZZ_N$. As
explained in subsection \ref{sec:periodicity}, periodic tilings
are less interesting because they can be considered as tilings of
factor groups of $\ZZ_N$. Finding many (or, indeed all)
non-periodic tilings of $\ZZ_N$ could be a way of testing the
Coven-Meyerowitz conditions and possibly producing
counterexamples. Besides being mathematically interesting on its
own right, this problem has a particular motivation in certain
modern {\it music} compositions. The interaction of mathematical
theory and musical background has been extensively studied in
recent years \cite{aaa,amiot1,mor,fri1,vuza}.

H. Fripertinger listed out all non-periodic tilings of $\ZZ_{72}$
and $\ZZ_{108}$ with the help of a computer search \cite{fri}. In
this paper we restrict our attention to $N=144$, but we believe
that the methods described here can be used to classify all
non-periodic tilings for all $N\le 200$. We also see from Section
\ref{sec:number-of-complements} that the number of non-periodic
tilings grows at least exponentially with $N$, so that for large
and highly-composite values of $N$ this task gets hopeless. Also,
the motivation for musical compositions does not extend beyond
$N=200-300$ due to obvious perceptional limitations. Let us also
mention that in order to test the Coven-Meyerowitz conditions one
will need to list non-periodic tilings of $\ZZ_N$ for values of
$N$ containing at least 3 different prime factors, such as $N=120,
180, 200$, or even much higher.

\section{Deciding the Coven-Meyerowitz conditions}\label{sec:deciding-cm-conditions}

In this section we describe in detail an algorithm which, given a set of integers
$$
A \subseteq \Set{0,\ldots,D}
$$
decides if $A$ satisfies the ($T_1$) and ($T_2$) conditions of Coven and Meyerowitz \cite{coven-meyerowitz}.
\begin{theorem}\label{th:cm-in-polynomial-time}
There is an algorithm to decide whether conditions ($T_1$)  and
($T_2$) (see \S\ref{sec:cm-conditions}) hold for a given $A
\subseteq \ZZ$ which runs in time polynomial in $D=\mbox{diam
}(A)$. This algorithm therefore decides if $A$ tiles $\ZZ$ when
$\Abs{A}$ contains at most two distinct prime factors.
\end{theorem}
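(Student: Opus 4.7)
The plan is to give an explicit procedure whose bottleneck is testing individual divisibilities $\Phi_n(X)\mid A(X)$, and to bound the number of such tests by a polynomial in $D$. Since $\deg A(X)\le D$, any cyclotomic factor $\Phi_n(X)$ of $A(X)$ must satisfy $\varphi(n)\le D$. The classical estimate $\varphi(n)\gg n/\log\log n$ shows that $\{n:\varphi(n)\le D\}$ has size $O(D\log\log D)$; in particular only polynomially many prime powers $p^\alpha$ satisfy $\varphi(p^\alpha)=p^{\alpha-1}(p-1)\le D$.

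The first step is to compute $S_A$. For each candidate prime power $p^\alpha$ in the above range, the identity $X^{p^\alpha}-1=\Phi_{p^\alpha}(X)(X^{p^{\alpha-1}}-1)$ shows that $\Phi_{p^\alpha}\mid A(X)$ is equivalent to $B(X)\cdot(X^{p^{\alpha-1}}-1)\equiv 0\pmod{X^{p^\alpha}-1}$, where $B(X)=\sum_{j=0}^{p^\alpha-1}c_jX^j$ and $c_j=\#\{a\in A:a\equiv j\pmod{p^\alpha}\}$. This is simply the circular-shift condition $c_j=c_{(j-p^{\alpha-1})\bmod p^\alpha}$ for every $j$, which together with the computation of the $c_j$'s takes $O(D)$ time per candidate. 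Summed over the polynomially many candidates, $S_A$ is produced in polynomial time, after which $(T_1)$ is decided by comparing $|A|$ with $\prod_{p^\alpha\in S_A}\Phi_{p^\alpha}(1)=\prod_{p^\alpha\in S_A}p$.

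For $(T_2)$, let $n^\ast$ be the product, over primes $p$ occurring in $S_A$, of the largest prime power $p^\alpha\in S_A$. If $\varphi(n^\ast)>D$, then the subset of $S_A$ giving $n^\ast$ already produces a cyclotomic $\Phi_{n^\ast}$ of degree exceeding $\deg A$, and $(T_2)$ fails. Otherwise every subset $T\subseteq S_A$ of pairwise-distinct-prime powers satisfies $\varphi(n(T))\le\varphi(n^\ast)\le D$ where $n(T)=\prod_{s\in T}s$, so the relevant integers $n(T)$ lie in $\{n:\varphi(n)\le D\}$, a set of size $O(D\log\log D)$. We enumerate these integers $n$, check whether the prime-power factorization of each such $n$ consists of elements of $S_A$ of distinct primes, and for every such $n$ test $\Phi_n(X)\mid A(X)$ by reducing $A(X)$ modulo $X^n-1$ and performing polynomial long division by $\Phi_n(X)$; since $n=\mathrm{poly}(D)$, each individual test runs in polynomial time.

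The main technical point to verify is that intermediate coefficient sizes during the divisions $B(X)\bmod\Phi_n(X)$ remain polynomially bounded. Although coefficients of $\Phi_n$ may grow with the number of prime factors of $n$, the constraint $\varphi(n)\le D$ forces $n=O(D\log\log D)$ and, combined with the identity $\Phi_n(X)=\Phi_{\mathrm{rad}(n)}(X^{n/\mathrm{rad}(n)})$ which allows one to work with the cyclotomic of the squarefree radical, keeps the arithmetic within $\mathrm{poly}(D)$. Assembling the three steps yields a polynomial-time decision algorithm for $(T_1)\wedge(T_2)$; the final claim of the theorem follows by invoking the Coven--Meyerowitz theorem, which asserts that $(T_1)$ and $(T_2)$ are jointly sufficient for $A$ to tile $\ZZ$ and that $(T_2)$ is necessary whenever $|A|$ has at most two distinct prime factors.
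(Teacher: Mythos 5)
Your proposal is correct and follows essentially the same route as the paper: compute $S_A$, verify $(T_1)$ directly, and for $(T_2)$ use a degree argument either to reject immediately or to bound the number of exhaustive divisibility tests by a polynomial in $D$. The only differences are implementational (you test $\Phi_{p^\alpha}\mid A$ via the cyclic-shift criterion and bound the candidate set by $\#\{n:\varphi(n)\le D\}=O(D\log\log D)$, while the paper precomputes the cyclotomic polynomials of degree at most $D$ and bounds the search by the count $(N_1+1)\cdots(N_k+1)-1\le D$ of required divisors), and both variants are valid.
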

\begin{proof}
The algorithms consists of the steps given below.

\step{
\underline{Compute all cyclotomic polynomials of degree up to $D$.}

This step is very easy to carry out in time polynomial in $D$ using, for instance,
the formula
$$
\Phi_n(x) = \prod_{d|n} (1-x^{n/d})^{\mu(d)},
$$
where $\mu(d)$ is the M\"obius function.
}

\step{
\underline{Determine the cyclotomic divisors of the polynomial $A(x)$.}

Again, this is easily doable in time polynomial in $D$. Let $S_A$
be the set of indices $n=p_i^{a_i}$, which are prime powers, such
that $\Phi_n(x) | A(x)$. Let $p_1, p_2, \dots p_k$ be the
different primes whose powers appear here, and let $N_i \ge 1$,
$i=1,\ldots,k$, be the number of relevant powers of the prime
$p_i$. }

\step{
\underline{Test if condition ($T_1$) holds.}

Having computed the set $S_A$ in the previous step, this amounts to checking the definition
of condition ($T_1$).
}

\step{
\underline{If $(N_1+1)(N_2+1)\cdots(N_k+1) - 1> D$ then answer that ($T_2$) fails. End.}

The explanation of this step is as follows. If ($T_2$) is to hold then for any choice of prime powers
(with respect to different primes) from the set $S_A$ there is a different divisor of $A(x)$,
namely the cyclotomic polynomial $\Phi_n(x)$ where $n$ is the product of the chosen prime powers.
Comparing degrees we obtain the inequality
$$
(N_1+1)(N_2+1)\cdots(N_k+1) - 1 \le D.
$$
}

\step{
\underline{Check exhaustively if ($T_2$) holds and reply accordingly.}

This exhaustive search has to check $(N_1+1)(N_2+1)\cdots(N_k+1) - 1$ possibilities
for ($T_2$) to fail (select a power of each involved prime $p_i$, with exponent from $0$ to $N_i$).
This number is at most $D$ (by the previous step) and therefore the total cost of this step of the algorithm
is polynomial in $D$.
}

The  fact that this algorithm decides if $A$ tiles $\ZZ$ when
$\Abs{A}$ contains at most two distinct prime factors follows from
Theorem B2 in \cite{coven-meyerowitz} which claims that, in that
case, the conjuction $T_1$ and $T_2$ is equivalent to tiling
$\ZZ$.
\end{proof}

\section{The local Coven-Meyerowitz conditions}\label{sec:local}
In this  section we give a local version of the Coven-Meyerowitz
conditions, relevant to tiling a given cyclic group $\ZZ_N$ and
not $\ZZ$, and derive the corresponding properties they enjoy.

Let $N$ be a positive integer.
Define $S_A^N$ be the set of prime powers $p^a | N$ such that $\Phi_{p^a}(X) \ |\  A(X)$.
Similarly define the conditions
\begin{itemize}
 \item[($T_1^N$):] $A(1) = \prod_{s \in S_A^N} \Phi_s(1)$,
 \item[($T_2^N$):] If $s_1, \ldots, s_m \in S_A^N$ are powers of distinct primes then
$\Phi_{s_1\cdots s_m}(X)\ |\ A(X)$.
\end{itemize}
In Theorem  \ref{th:local} below statements
(\ref{lb:local-imply-tiling}) and
(\ref{lb:tiling-implies-local-t1}) correspond to Theorems A and B1
of \cite{coven-meyerowitz}. Statement
(\ref{lb:conjecture-implies-local-conjecture}) means that the
Coven-Meyerowitz conjecture is stronger than the corresponding
local conjecture. Statement (\ref{lb:two-primes-local}),
corresponding to Theorem B2 in \cite{coven-meyerowitz}, means that
we know the local condition $T_2^N$ is also true when at most two
primes are involved in $\Abs{A}$. Finally, statement
(\ref{lb:algorithm-local}) means that we have a polynomial time
(in $N$) algorithm to decide the local Coven-Meyerowitz conditions
$T_1^N$ and $T_2^N$.
\begin{theorem}\label{th:local}
For any finite set $A \subseteq \ZZ$ and any positive integer $N$:
\begin{enumerate}
\item \label{lb:local-imply-tiling}
$T_1^N$ and $T_2^N$ $\Longrightarrow$ $A$ tiles $\ZZ_N$
\item \label{lb:tiling-implies-local-t1}
$A$ tiles $\ZZ_N$ $\Longrightarrow$ $T_1^N$
\item \label{lb:conjecture-implies-local-conjecture}
If ($A$ tiles $\ZZ$ $\Longrightarrow$ $T_1$ and $T_2$) then
($A$ tiles $\ZZ_N$ $\Longrightarrow$ $T_1^N$ and $T_2^N$).
\item \label{lb:two-primes-local}
If there are at most two distinct primes in $\Abs{A}$ and $A$ tiles $\ZZ_N$ then $T_1^N$ and $T_2^N$ hold.
\item \label{lb:algorithm-local}
There is an algorithm which decides if $T_1^N$ and $T_2^N$ hold in
time polynomial in $N$. This algorithm therefore decides if $A$
tiles $\ZZ_N$ when $\Abs{A}$ contains at most two distinct prime
factors.
\end{enumerate}
\end{theorem}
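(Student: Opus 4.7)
My plan is to dispatch the five items in the stated order, noting that each local claim concerns only those cyclotomic factors $\Phi_s$ whose roots lie among the $N$-th roots of unity, i.e.\ those with $s\mid N$. Each of (\ref{lb:local-imply-tiling})--(\ref{lb:two-primes-local}) should follow by restricting the corresponding global argument of \cite{coven-meyerowitz} to $S_A^N\subseteq S_A$, while (\ref{lb:algorithm-local}) is an adaptation of Theorem \ref{th:cm-in-polynomial-time}.

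For (\ref{lb:local-imply-tiling}) I would rerun the proof of Theorem A in \cite{coven-meyerowitz}. That proof takes $S_A$ as input and builds an explicit complement $B$ for which $A\oplus B$ tiles $\ZZ_M$, where $M$ is a product of prime powers drawn from $S_A$. Applied with $S_A^N$ in place of $S_A$ (every one of whose elements divides $N$), the output modulus $M$ itself divides $N$, and the resulting tiling of $\ZZ_M$ lifts to a tiling of $\ZZ_N$ by replicating $B$ along the subgroup $M\ZZ_N$. For (\ref{lb:tiling-implies-local-t1}) I would mimic CM's proof of $T_1$ but restricted to $\ZZ_N$: the identity $A(X)B(X)\equiv 1+X+\cdots+X^{N-1}\pmod{X^N-1}$ shows that for every $p^a\mid N$ the irreducible polynomial $\Phi_{p^a}$ divides exactly one of $A(X),B(X)$. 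Combined with $A(1)B(1)=N$ and the elementary fact $\Phi_{p^a}(1)=p$ for $a\ge 1$, a $p$-adic valuation count prime by prime forces equality $v_p(A(1))=\#\{p^a\mid N:\Phi_{p^a}\mid A(X)\}$, which is $T_1^N$.

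Parts (\ref{lb:conjecture-implies-local-conjecture}) and (\ref{lb:two-primes-local}) are then immediate. Any tiling of $\ZZ_N$ extends periodically to a tiling of $\ZZ$ by $A$, so under the CM conjecture (respectively, when $\Abs{A}$ has at most two prime factors, by Theorem B2 of \cite{coven-meyerowitz}) the global conditions $T_1$ and $T_2$ hold for $A$. Since $S_A^N\subseteq S_A$, every product of distinct-prime elements of $S_A^N$ is also such a product for $S_A$, so $T_2\Rightarrow T_2^N$; meanwhile $T_1^N$ is already supplied by (\ref{lb:tiling-implies-local-t1}).

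For the algorithm (\ref{lb:algorithm-local}) I would follow Theorem \ref{th:cm-in-polynomial-time} almost word for word, replacing the parameter $D=\diam A$ by $N$. Assuming $A\subseteq\{0,\ldots,N-1\}$ (otherwise reduce mod $N$), we have $\deg A(X)\le N-1$. Factor $N$ by trial division, enumerate the prime powers $p^a\mid N$, compute each $\Phi_{p^a}(X)$, and test divisibility into $A(X)$ to identify $S_A^N$; then $T_1^N$ is a direct comparison of integers. For $T_2^N$, if $(N_1+1)\cdots(N_k+1)-1>N-1$ we reject (the required cyclotomic divisors would overflow $\deg A(X)$); otherwise exhaustively check the at most $N-1$ divisibilities. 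All operations are polynomial in $N$, and the final clause of the theorem then follows by combining (\ref{lb:local-imply-tiling}) with (\ref{lb:two-primes-local}). The main delicate point is the lifting step in (\ref{lb:local-imply-tiling}): one must verify that the CM complement, when built from $S_A^N$, really does have modulus dividing $N$, which is exactly where the restriction to prime powers dividing $N$ is crucial.
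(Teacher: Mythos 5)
Your proposal is correct, and for items (\ref{lb:local-imply-tiling}), (\ref{lb:conjecture-implies-local-conjecture}), (\ref{lb:two-primes-local}) and (\ref{lb:algorithm-local}) it follows essentially the paper's route: rerun the proof of Theorem A of \cite{coven-meyerowitz} on $S_A^N$ (noting $\lcm (S_A^N)\mid N$ so the constructed tiling of $\ZZ_{\lcm(S_A^N)}$ lifts to $\ZZ_N$), pass from a tiling of $\ZZ_N$ to a tiling of $\ZZ$ to import the global $T_1,T_2$ and restrict them via $S_A^N\subseteq S_A$, and repeat the algorithm of Theorem \ref{th:cm-in-polynomial-time} with $S_A^N$ in place of $S_A$. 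The one place you genuinely diverge is item (\ref{lb:tiling-implies-local-t1}): you re-prove $T_1^N$ from scratch by the counting argument ($A(1)B(1)=N$, every $\Phi_{p^a}$ with $p^a\mid N$ divides $A(X)B(X)$, and $\Phi_{p^a}(1)=p$ forces equality in $A(1)\ge\prod_{s\in S_A^N}\Phi_s(1)$), whereas the paper deduces it from the already-proved global Theorem B1 of \cite{coven-meyerowitz} combined with their Lemma 2.1, which says that if $A$ tiles $\ZZ_N$ then every element of $S_A$ divides $N$, i.e.\ $S_A=S_A^N$, so that the local condition is literally the global one. Your route is self-contained and keeps the local statement independent of the global theory; the paper's is shorter and records the useful identity $S_A=S_A^N$ along the way. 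One small caveat in your sketch: you assert at the outset that $\Phi_{p^a}$ divides \emph{exactly} one of $A(X)$ and $B(X)$, but a priori the congruence $A(X)B(X)\equiv 1+X+\cdots+X^{N-1}\pmod{X^N-1}$ only yields \emph{at least} one (the product could conceivably be divisible by $\Phi_{p^a}^2$). The valuation count needs only ``at least one,'' and exclusivity then falls out of the resulting equality, so this is cosmetic rather than a gap.
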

\begin{proof}
The proof of (\ref{lb:local-imply-tiling}) is  essentially the
same as that of Theorem A in \cite{coven-meyerowitz}. When reading
that proof it might help to observe that $A(1)B(1) = \lcm(S_A)$,
or, in the local version, $A(1)B(1) = \lcm(S_A^N)$.

For the proof of (\ref{lb:tiling-implies-local-t1})   we use Lemma
2.1 in \cite{coven-meyerowitz}. According to that, if $A$ tiles
$\ZZ_N$ it follows that $S_A$ contains only divisors of $N$. But
$A$ tiles $\ZZ_N$ implies that $A$ tiles $\ZZ$, hence, by Theorem
B1 of \cite{coven-meyerowitz}, $T_1$ is valid. But $S_A = S_A^N$
in this case hence $T_1^N$ is valid too.

To prove (\ref{lb:conjecture-implies-local-conjecture}),  that the
Coven-Meyerowitz conjecture  implies the corresponding local
conjecture one first observes that if $A$ tiles $\ZZ_N$ then it
also tiles $\ZZ$. By the Coven-Meyerowitz conjecture follows the
validity of $T_1$ and $T_2$ and, finally, observe that $T_2$
implies $T_2^N$ for all $N$.

To show  (\ref{lb:two-primes-local}) we have, if $A$ tiles
$\ZZ_N$, that $A$ tiles $\ZZ$, hence $T_2$ holds (by Theorem B2 in
\cite{coven-meyerowitz}), which implies that $T_2^N$ holds.

To prove  (\ref{lb:algorithm-local}) we merely repeat the
algorithm of \S \ref{sec:deciding-cm-conditions} but replacing
$S_A$ by $S_A^N$, etc. Because of (\ref{lb:two-primes-local}) this
algorithm also decides if $A$ tiles $\ZZ_N$ if $\Abs{A}$ contains
at most two distinct prime factors.
\end{proof}

\section{The number of non-periodic tiling complements} \label{sec:number-of-complements}

One might expect that the number of non-periodic tilings of $\ZZ_N$
is small. However this is not the case.
\begin{theorem}\label{exp}
There are arbitrarily large $N$ and non-periodic tilings $\ZZ_N =
A \oplus B$, such that  there are additional distinct non-periodic
tiling complements $B_1,\ldots,B_k$ of $A$, with $k \ge
e^{C\sqrt{N}}$, $C$ a constant.
\end{theorem}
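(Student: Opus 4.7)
The plan is to construct, for arbitrarily large $N$, a single tile $A\subset\ZZ_N$ admitting exponentially many non-periodic tiling complements. Start from any Vuza (non-periodic) tiling $\ZZ_{N_0}=A_0\oplus B_0$, normalising $A_0,B_0\subseteq\{0,1,\ldots,N_0-1\}$; the paradigmatic base case is $\ZZ_{72}$ with $|A_0|=6$ and $|B_0|=12$. Inside $\ZZ_{2N_0}$ I put
$$A=A_0\oplus\{0,N_0\}=A_0\cup(A_0+N_0),$$
and for every function $f\colon B_0\to\{0,1\}$ I set
$$B_f=\{b+f(b)\,N_0:b\in B_0\}.$$
The identity $\ZZ_{2N_0}=A\oplus B_f$ is straightforward: given $y\in\ZZ_{2N_0}$, reducing mod $N_0$ and using $A_0\oplus B_0=\ZZ_{N_0}$ uniquely determines the pair $(a,b)\in A_0\times B_0$, after which the summand $\epsilon N_0\in\{0,N_0\}$ contributed by $A$ is forced by the ``high digit''.

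Next I verify that the $B_f$ are pairwise distinct and non-periodic. Distinctness is immediate: from $B_f$ one reads off $f(b)$ by asking whether $b$ or $b+N_0$ is the element of $B_f$ reducing to $b$ modulo $N_0$. For non-periodicity, let $\pi\colon\ZZ_{2N_0}\to\ZZ_{N_0}$ be the quotient by the subgroup $\{0,N_0\}$; any period $g$ of $B_f$ projects to a period $\pi(g)$ of $\pi(B_f)=B_0$, but $B_0$ is Vuza, so $\pi(g)=0$ and hence $g=jN_0$ with $j\in\{0,1\}$. Then $B_f+jN_0=B_f$ forces $f(b)+j\equiv f(b)\pmod 2$ for every $b\in B_0$, so $j=0$. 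This already provides $2^{|B_0|}$ pairwise distinct non-periodic tiling complements of the single tile $A$ in $\ZZ_{2N_0}$.

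To make $N$ arbitrarily large while keeping $|B_0|\ge\sqrt{N_0}$, I iterate a squaring construction: if $\ZZ_N=A\oplus B$ is a Vuza tiling with $A,B\subseteq[0,N)$, then
$$\ZZ_{N^2}=(A+NA)\oplus(B+NB)$$
is again Vuza. The tiling identity follows by decomposing $y\in\ZZ_{N^2}$ as $y=y_0+Ny_1$ with $y_0,y_1\in[0,N)$ and applying $A\oplus B=\ZZ_N$ twice (first to the low digit, then to the high digit after a possible carry). Non-periodicity of $B+NB$ follows because any period $g_0+Ng_1$ forces $g_0$ to be a period of $B$ modulo $N$ and, after subtracting it, $g_1$ to be another period of $B$ modulo $N$; both vanish by the Vuza property. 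Starting from $\ZZ_{72}$ and iterating $k$ times produces a Vuza tiling of $\ZZ_{N_k}$ with $N_k=72^{2^k}$ and $|B_k|=12^{2^k}$, and a direct computation gives $|B_k|/\sqrt{N_k}=2^{2^{k-1}}\ge\sqrt 2$, so in particular $|B_k|\ge\sqrt{N_k}$. Applying the construction of the first two paragraphs to this tiling and setting $N=2N_k$, we get at least $2^{|B_k|}\ge 2^{\sqrt{N_k}}=\exp\bigl((\log 2/\sqrt 2)\sqrt N\bigr)$ distinct non-periodic tiling complements of a single tile in $\ZZ_N$, proving the theorem with $C=\log 2/\sqrt 2$. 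The only genuinely nontrivial step, and the main obstacle, is the Vuza-preservation of the squaring construction, which is handled by the short projection argument just outlined.
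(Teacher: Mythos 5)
The local verifications in your argument are all correct: $\ZZ_{2N_0}=A\oplus B_f$ does hold for every $f$ (the convolution $\chi_A*\chi_{B_f}$ descends to $\chi_{A_0}*\chi_{B_0}$ on the quotient by $\Set{0,N_0}$, since each coset of $\Set{0,N_0}$ meets $B_f$ in at most one point), the $2^{|B_0|}$ sets $B_f$ are pairwise distinct and non-periodic, the squaring construction does preserve the Vuza property, and the arithmetic yielding $e^{C\sqrt N}$ is right. The gap is in the object you end up with: your tile $A=A_0\oplus\Set{0,N_0}$ satisfies $A+N_0=A$, so $A$ is \emph{periodic} in $\ZZ_{2N_0}$. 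The theorem asserts the existence of \emph{non-periodic tilings} $\ZZ_N=A\oplus B$, and throughout this paper (the discussion of good groups in \S\ref{sec:periodicity}, the classification in Section \ref{sec:all-tilings}) a tiling is non-periodic only when \emph{neither} factor is periodic; a periodic factor collapses the tiling to a quotient group, and indeed all of your tilings project to the single tiling $\ZZ_{N_0}=A_0\oplus B_0$. What you have proved is that a \emph{periodic} tile can have exponentially many non-periodic complements, which is strictly weaker than the statement.

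The defect is structural rather than cosmetic: the exponential freedom in your construction comes precisely from $A$ being the full preimage of $A_0$ under the quotient by $H=\Set{0,N_0}$ (each $b\in B_0$ may be lifted to either element of its $H$-coset independently), and any tile of that form is automatically $H$-periodic. The paper's proof obtains the freedom elsewhere: it takes $N=2\cdot3\cdot5\cdot p\cdot q$, lets $A$ be the (non-periodic) $3\times5$ box in $\ZZ_{3p}\times\ZZ_{5q}\times\ZZ_2$, and exploits the fact that each ``row'' of each layer can be tiled by an independently shifted arithmetic progression; a fixed perturbation of one row and one column then destroys all periods of the complement, while $A$ itself was never periodic. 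To rescue your approach you would need to replace $A_0\oplus\Set{0,N_0}$ by a non-periodic tile that still admits many independent local choices in its complements --- which is essentially what the paper's construction supplies.
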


\begin{proof}
The following sketch of  a proof relies on a construction given in
\cite{long-periods} where it was used to prove that there are
tilings of the integers with a tile contained in
$\Set{0,\ldots,D}$ but of period at least $C D^2$.

We take $N=2\cdot 3 \cdot 5 \cdot p \cdot q$ where $p$  and $q$
are two different large primes, roughly of the same size $\sim
\sqrt N$. We view the group $\ZZ_N$ as
$$
\ZZ_N = \ZZ_{3p} \times \ZZ_{5q} \times \ZZ_2
$$
as shown in Fig.\ \ref{fig:layers}.
\begin{figure}
 \begin{center} \resizebox{10cm}{!}{\input 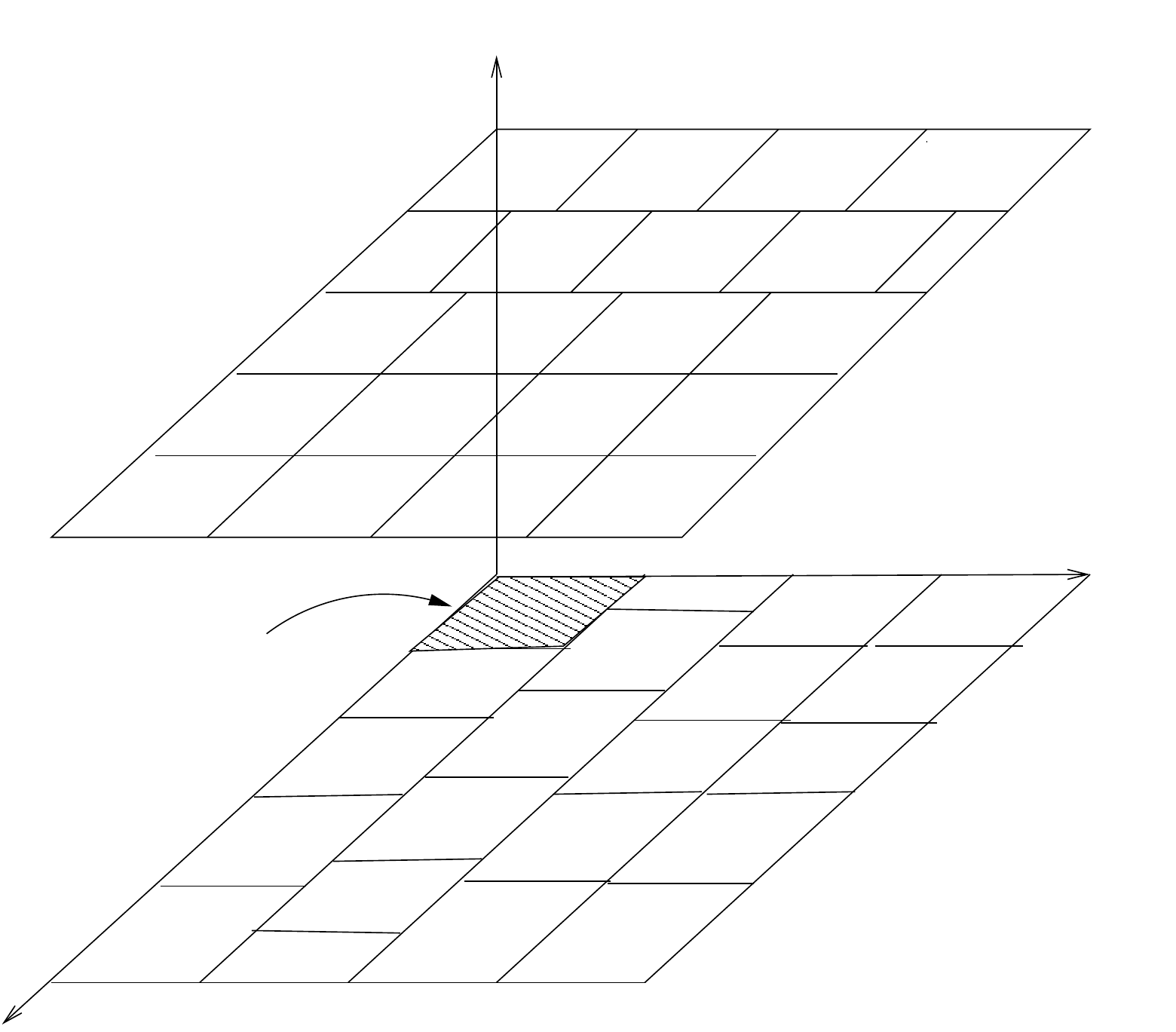_t} \end{center}
\caption{The group $\ZZ_{3p} \times \ZZ_{5q} \times \ZZ_2$}
\label{fig:layers}
\end{figure}
We take the set $A$ to be the ``$3\times 5$ rectangle at the
origin''. The set $A$ can then tile $\ZZ_N$ in a non-periodic way
by first tiling the lower and the upper layer in the ordinary way
and then perturbing a row of the lower layer and a column of the
upper layer, as shown in Fig.\ \ref{fig:layers}. Each such
perturbation manages to destroy one of the two periods that
existed in the ordinary tiling (for a detailed proof see
\cite{long-periods}).

If one wants to create many different non-periodic tilings of $\ZZ_N$ with $A$ one does as follows.
\begin{enumerate}
\item
Keep the upper layer the same (with one perturbed row only).
\item
Keep half  the lower layer the same: rows indexed from $5q/2$ to
$5q$ are left unperturbed (refer to Fig.\ \ref{fig:layers}).
\item
Perturb  the remaining rows of the lower layer arbitrarily,
subject only to the restriction that at least one of them is
perturbed by a non-zero amount.
\end{enumerate}
It is clear that the number of such complements is at least
$$
\sim 3^{5q/2} \sim e^{C\sqrt{N}}.
$$
Each such  tiling is non-periodic: suppose $g=(a,b,c) \in \ZZ_{3p}
\times \ZZ_{5q} \times \ZZ_2$ is a period. The two layers cannot
be swapped by a translation by $g$, as one has perturbed rows but
the other one does not, so $c=0$. The upper layer is mapped to
itself by a $g$-translation. This can only happen if $b=0$ as the
perturbed column is unique. Finally $a=0$ since the lower layer
cannot move in the direction of $\ZZ_{5q}$ and be mapped into
itself: the perturbed row closest (in the ``direction of motion'')
to the unperturbed block of $\sim 5q/2$ rows cannot move to a
perturbed row.

Even if one  considers two tilings which differ by a translation
identical the number would still remain exponential in $N$.
\end{proof}

\section{Finding all non-periodic tilings in a cyclic group}\label{sec:all-tilings}

As explained in Section \ref{sec:intro} there is a particular
motivation coming from music compositions to list out {\it all
non-periodic} tilings of a cyclic group $\ZZ_N$, for relatively
small values of $N$. H. Fripertinger \cite{fri} achieved this task
for $N=72, 108$. In this section we settle $N=144$, and we believe
that the algorithms described here are likely to work for other
values like $N=120, 180, 200, 216$. On the other hand we see from
Theorem \ref{exp} that this task becomes hopeless for large and
``highly composite'' $N$.

We cannot offer any rigorous mathematical statements here with
regard to the complexity of the algorithms used. We provide
heuristics, observe that methods seem to work remarkably well and
we are able to achieve a full classification for $N=144$.

\begin{remark}
Throughout this section we rely on results of
\cite{coven-meyerowitz}. Our algorithm works only when tiling
implies the $T_2^N$ condition (see \S\ref{sec:local}). In
particular it works when either $N$ has at most two distinct prime
factors, such as $N = 144 = 2^4\cdot 3^2$, or has three distinct
prime factors but one of them appears to the first power, such as
$N = 120 = 2^3\cdot3\cdot 5$ or $N = 180 = 2^2\cdot 3^2 \cdot 5$.
The reason for this is that in any tiling $\ZZ_N = A \oplus B$ in
such a case either $\Abs{A}$ or $\Abs{B}$ has at most two distinct
primes factors.
\end{remark}

A basic ingredient of our algorithm is the ``fill-out procedure''
performing the following task: given a set $A$ in a group $G$,
list out {\it all} tiling complements $B$ of $A$. We cannot give
any precise statement on how efficiently this task can be
achieved except to note that it performs really well in practice.
However, let us mention here that the method described
below was already successfully used in \cite{mora} in refuting a
conjecture of Lagarias and Szab\'o on a necessary condition for
the existence of universal spectrum, and it could well be useful
in tackling other question related to tiling. The credit for this
heuristic algorithm goes to P. M\'ora in \cite{mora}.

\subsubsection*{The Fill-out Procedure}

\ \\

Assume $A\subset \ZZ_N$ is given (in fact, the procedure works for
any finite abelian group $G$, but we restrict our attention to
cyclic groups). The task is to find all tiling complements $B$ of
$A$. As usual, we make the normalization assumption that $0\in A,
B$.

We will build up, by adding elements one by one, all tiling
complements $B$ of $A$. For this, we move in the space of {\em
packing complements} $P$ of $A$, i.e. sets $P=\{p_1, \dots p_m\}$
such that all translated copies $A+p_j$ are disjoint from each
other. We are trying to grow the sets $P$ so that they become {\it
tiling complements}. Our exploration starts with the set $P =
\Set{0}$. At any given step we will attempt to extend $P$ by one
new element in a particular manner (which is described in detail
in the next paragraph). If no such element exists we backtrack. We
describe our algorithm ``{\tt explore}'' below, and note that {\tt
explore}($P$) finds all tiling complements $B$ containing $P$. We
invoke it as {\tt explore}($\Set{0}$) with the list of tiling
complements of $A$ being initially empty. The recursive procedure
{\tt explore} is described in Fig.\ \ref{fig:explore}.

\begin{figure}[ht]

{\tt explore}($P$)\hfil
\begin{enumerate}
\item
If $P$ has already been explored, {\tt return}.
\item
Mark $P$ as explored.
\item
If $P$ is a tiling complement of $A$, add it to the list of tiling
complements and {\tt return}.
\item
Compute the function $r:\ZZ_N \setminus (A\oplus P) \to \NN$,
defined by $r(x)=$ number of ways to add an extra copy of $A$ to
$A\oplus P$ so that $x$ is now covered and the new copy of $A$
does not intersect $A \oplus P$.
\item
Rank all $x \in \ZZ_N \setminus (A\oplus P)$ according to $r(x)$:
$$
r(x_1) \le r(x_2) \le \cdots \le r(x_k),
$$ in such a way that if $r(x_i)=r(x_{i+1})$ then $x_i<x_{i+1}$
(for this purpose of ordering the elements of $\ZZ_N$ are thought
of as $\{0,1,\dots N-1\}$). Here $k = \Abs{\ZZ_N \setminus
(A\oplus P)}.$
\item
If $r(x_1) = 0$, {\tt return}.
\item
Consider $x_1$.
\begin{itemize}
\item
Let $A+y_1, \ldots, A+y_{r(x_1)}$ be the copies of $A$ that can be
added to $A \oplus P$ in a non-overlapping way and that contain
$x_1$.
\item
{\tt explore} ($P \cup \Set{y_1}$), $\ldots$, {\tt explore} ($P
\cup \Set{y_{x(r_1)}}$).
\end{itemize}
\end{enumerate}

\caption{The {\tt explore} recursive procedure}
\label{fig:explore}
\end{figure}

The efficiency of this procedure is due to the heuristic of
expanding a given packing complement $P$ of $A$ by adding to it a
copy of $A$ which covers an element $x$ that is the most
restrictive in the sense that there are few possible ways to cover
it. This is achieved by ranking the yet uncovered elements $x$ in
increasing order of the function $r(x) = r(P,x)$ that counts in
how many ways an element $x$ can be covered by adding one more
copy of $A$ to the given packing $P$. Clearly, if $r(x)=0$ for
some $x$ then $x$ cannot be covered in any admissible way, so that
$P$ can definitely not be extended to a tiling complement (it is
still possible that $P$ could be extended by several further
elements to form a larger packing complement $P'$, but it can
never grow to become a {\it tiling} complement due to $x$ never
being covered). It also often happens that $r(x)=1$, i.e. we are
{\it forced} to place the next copy of $A$ at a certain position
in order to cover $x$, and once this copy is placed, there is even
less room in $\ZZ_N$, and it is likely that there will be another
$x$ such that $r(x)=1$, or at least small.

With this fill-out procedure at hand we can now describe the
algorithm to list all non-periodic tilings of $\ZZ_{144}$. We
merely describe the algorithm here, as the full documentation (or
listing out the numerous arising sets) would be far too lengthy
for this paper. The results and documentation are available online
at \cite{web}.

\vskip 0.5 truecm

{\bf Normalizing conditions.} We always assume that $0\in A,B$. Also, it
is clear that in terms of tiling questions a set $A=\{0, a_1, a_2,
\dots \}$ is equivalent to its own translated copies $A-a_1$,
$A-a_2$, $\dots$ all containing zero. It is customary to include
only one representative from these equivalent copies, and we are
going to follow this tradition (so that the numbers described in
the last section correspond to this normalization).

\subsubsection*{The Algorithm}

\ \\

{\it Step 1.} The prime powers dividing 144 are 2, 4, 8, 16, 3 and
9. In any tiling $A\oplus B=\ZZ_{144}$ the cyclotomic polynomials
corresponding to these prime powers must divide \underline{exactly} one of
$A(x)$ and $B(x)$, according to condition $T_1$ of
\cite{coven-meyerowitz}. Therefore we first make a list of all
possible partitions $\{H, H^c\}$ of the elements
$\{2,4,8,16,3,9\}$. There are 32 such partitions (note that $\{H,
H^c\}$ and $\{H^c, H\}$ are the same). Our task is to decide for
each partition whether any non-periodic tilings correspond to it.

\vskip 0.5 truecm

{\it Step 2.} Certain partitions produce only periodic tilings due
to condition $T_2$ of \cite{coven-meyerowitz}. Indeed, as an
example, take the partition $\{\{2,4\},\{8,16,3,9\}\}$. Due to
condition $T_2$ of \cite{coven-meyerowitz} the cyclotomic
polynomials $\Phi_{24}(x),\Phi_{72}(x),\Phi_{48}(x),\Phi_{144}(x)$
must also divide $B(x)$, together with
$\Phi_{8}(x),\Phi_{16}(x),\Phi_{3}(x),\Phi_{9}(x)$. From this it
follows that the support of $\ft{\chi_B}$ is contained in the
subgroup $\Set{0,2,4,\ldots,142}$ and this automatically makes $B$
periodic. Therefore, in this step, discard all partitions which
imply automatic periodicity of either $A$ or $B$ by condition
$T_2$.

The remaining partitions are:

\noindent $\{\{3, 4, 8\}$, $\{2, 9,16\}\}$, $\{\{3, 4, 9\}$, $\{2,
8, 16\}\}$, $\{\{3, 4, 16\}$, $\{2, 8, 9\}\}$, $\{\{3, 8, 9\}$,
$\{2, 4, 16\}\}$, $\{\{3, 8, 16\}$, $\{2, 4, 9\}\}$, $\{\{4, 8,
9\}$, $\{2, 3, 16\}\}$, $\{\{4, 8, 16\}$, $\{2, 3, 9\}\}$, $\{\{4,
9, 16\}$, $\{2, 3, 8\}\}$, $\{\{8, 9, 16\}$, $\{2, 3, 4\}\}$,
$\{\{2, 3, 4, 8\}$, $\{9, 16\}\}$, $\{\{2, 3, 4, 16\}$, $\{8,
9\}\}$, $\{\{2, 3, 8, 9\}$, $\{4, 16\}\}$, $\{\{2, 3, 8, 16\}$,
$\{4, 9\}\}$, $\{\{2, 4, 8, 16\}$, $\{3, 9\}\}$, $\{\{2, 4, 9,
16\}$, $\{3, 8\}\}$, $\{\{2, 8, 9, 16\}$, $\{3, 4\}\}$, $\{\{3, 4,
8, 16\}$, $\{2, 9\}\}$, $\{\{4, 8, 9, 16\}$, $\{2, 3\}\}$, $\{\{2,
3, 4, 8, 16\}$, $\{9\}\}$, $\{\{3,4, 8, 9\}$, $\{2, 16\}\}$,
$\{\{2, 3, 4, 9\}$, $\{8, 16\}\}$, $\{\{2, 3, 4, 8, 9\}$,
$\{16\}\}$.

\vskip 0.3 truecm

We remark here that most of these partitions will {\it not}
produce non-periodic tilings. If there were further theoretical
considerations upon which certain partitions could be discarded,
then it would probably be possible to apply our algorithm for
higher values of $N$. For instance, it could be true that if
either $|A|$ or $|B|$ is a prime power then the tiling must be
periodic. This would allow us to discard several further
partitions.

\vskip 0.3 truecm

Assume we are dealing with a fixed partition $P=\{H, H^c\}$. The
natural approach is the following: by the structure theory given
in Section 4 of \cite{coven-meyerowitz} one can list out all
subsets $A\subset \ZZ_{144}$ such that $A$ tiles $\ZZ_{144}$ and
$\Phi_h (x)$ divides $A(x)$ for all $h\in H$.

We remark here that besides using the structure theory of
\cite{coven-meyerowitz} there is an alternative way of listing out
these sets $A$. Namely, take the least common multiple $L$ of the
prime powers appearing in $H$. By the Remark following the proof
of Theorem A in \cite{coven-meyerowitz} one can construct a
universal tiling complement $B$ in $\ZZ_L$ of any tile $C$ in $\ZZ_L$
such that $\Phi_h (x)$ divides $C(x)$ for all $h\in H$. The set
$B$ is constructed by taking the product of certain cyclotomic
polynomials as described in the proof of Theorem A of
\cite{coven-meyerowitz}. Next we apply our fill-out procedure to
$B$ in $\ZZ_L$, and hence list out all possible sets $C$ in $\ZZ_L$.
Finally, all the desired tiles $A$ in $\ZZ_N$ must reduce to one of
these sets $C$ modulo $L$. Therefore, to list out the sets $A$ we
must shift the elements of sets $C$ by multiples of $L$ in all
possible ways.

Then these sets $A$ need to be grouped into equivalence classes
according to the zero set of the Fourier transform of $\chi_A$. A
similar procedure must be done for subsets $B$. Then one needs to
select the equivalence classes which correspond to non-periodic
sets $A$ and $B$ (this can be read off from the zero-sets of the
Fourier transform, or directly from any particular representative
of the class). Finally, if a class of non-periodic $A$'s and a
class of non-periodic $B$'s are such that the union of the
zero-sets of their Fourier transform contains $\ZZ_{144}\setminus
\{0\}$, then any representatives of these classes tile with each
other in a non-periodic way.

For some partitions this approach can indeed be carried out.
However, the problem is that for certain partitions the number of
tiles $A$ and/or $B$ provided by the structure theory of
\cite{coven-meyerowitz} can be overwhelmingly large. Take the
example $\{\{2, 3, 8, 16\}, \{4, 9\}\}$. The structure theory of
\cite{coven-meyerowitz} provides two different tiles $A_1, A_2$ of
cardinality 24 modulo $\lcm (2,3,8,16)=48$, such that $A(x)$ is
divisible by each of $\Phi_\Set{2,3,8,16}(x)$. The tiles $A$ modulo
144 are therefore obtained as sets that reduce to $A_1$ or $A_2$
modulo 48. There are $2\cdot 3^{23}$ such sets.

\vskip 0.5 truecm

{\it Step 3.} To overcome this problem, we only list one type of
the tiles (either $A$'s or $B$'s, whichever are fewer). In the
above example we list the tiles of type $B$. The structure theory
of \cite{coven-meyerowitz} provides six different tiles $B_1,
\dots B_6$ of cardinality 6 modulo $\lcm (4,9)=36$, such that
$B(x)$ is divisible by each of $\Phi_\Set{4,9}(x)$. The tiles $B$
modulo 144 are therefore obtained as sets that reduce to any of
$B_1, \dots B_6$ modulo 36. There are $6\cdot 4^{5}=6144$ such
sets. We can easily sort them into equivalence classes according
to the zero-set of their Fourier transforms. There are five
classes $C_1,\ldots, C_5$ corresponding to non-periodic tiles $B$.
They are characterized by the cyclotomic divisors $\Phi_h(x)$,
with $h$ being in $C_1=\{72, 36, 18, 9, 4\}$, $C_2=\{36, 12, 9,
4\}$, $C_3=\{36, 18, 9, 4\}$, $C_4=\{36, 18, 12, 9, 4\}$,
$C_5=\{72, 36, 18, 12, 9, 4\}$.

\vskip 0.5 truecm

{\it Step 4.} Next we discard those equivalence classes $C_j$
which automatically make any tiling complement $A$ periodic.
Consider, for example, $C_1$ above so that our tile $B$ has
cyclotomic divisors $\Phi_{\{72, 36, 18, 9, 4\}}$. Then any tiling
complement $A$ of $B$ must have cyclotomic divisors $\Phi_{\{2, 3,
6, 8, 12, 16, 24, 48, 144\}}$ (and possibly others), which
automatically make $A$ periodic.

\vskip 0.5 truecm

{\it Step 5.} For any remaining equivalence class $C_j$ we take a
representative $B_j$, and run our "fill-out procedure" to find
{\it all} tiling complements $A$ of $B_j$. The point is that $B_j$
is non-periodic, so that heuristically we expect not too many
tiling complements of $B_j$ to exist (although this heuristic
breaks down if $N$ is large and highly composite, as shown by
Theorem \ref{exp}). Once we have all tiling complements $A$ of
$B_j$ we can select the non-periodic ones, if any. Let $N_j$
denote the collection of all non-periodic tiling complements $A$
of $B_j$. Then we conclude that all sets $A$ in $N_j$ tile with
all representatives $B$ of the class $C_j$. We repeat this
procedure for all $j$ and we arrive at a list of {\it all}
non-periodic tilings corresponding to the fixed partition $P$. In
the specific example above, Step 4 already discards all classes
$C_j$ so that Step 5 becomes unnecessary.

We repeat Steps 3-5 for all partitions $P$ listed in Step 2, and
we arrive at a list of {\it all} non-periodic tilings of
$\ZZ_{144}$.

As a last normalization step, we must recall that a tile
$A=\{0,a_1, \dots \}$ and its shifted copies $A-a_1$, etc. are
considered equivalent, and keep only one representative for each
tile. And, naturally, the same applies to the tiles of type $B$.

\vskip 0.5 truecm

{\it Step 6.} {\bf Exceptional cases}. We have been able to execute the
steps above for all partitions listed in Step 2 except for $\{\{2,
4, 8, 16\}, \{3, 9\}\}$. In that case the number of tiles provided
by the structure theory of \cite{coven-meyerowitz} is too large
for both $A$ and $B$, and therefore the numerical search would
take several days. We choose to deal with this case by invoking
some results from the literature. We will show that either $A$ or
$B$ must be periodic.

Note that $|A|=16$ and $|B|=9$ by condition $T_1$ of
\cite{coven-meyerowitz}. Recall the following.
\begin{theorem}\label{th:sands} {\rm (Sands \cite{sands})}
If $A\oplus B=\ZZ_N$ and $N$ has at most two prime factors
$p,q$ then either $A$ or $B$ is contained in the subgroup $p\ZZ_N$
or $q\ZZ_N$.
\end{theorem}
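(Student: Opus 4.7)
My plan is to combine the cyclotomic structure of the tiling equation with an induction on the number of prime factors of $N=p^aq^b$, counted with multiplicity. I would first set up the mask polynomials $A(X)=\sum_{a\in A}X^a$ and $B(X)=\sum_{b\in B}X^b$ in $\ZZ[X]/(X^N-1)$. The hypothesis $\ZZ_N=A\oplus B$ is equivalent to $A(X)B(X)\equiv 1+X+\cdots+X^{N-1}$, and via the factorization $\frac{X^N-1}{X-1}=\prod_{d\mid N,\,d>1}\Phi_d(X)$ this forces each cyclotomic $\Phi_d$ with $d\mid N$, $d>1$, to divide \emph{exactly one} of $A(X)$, $B(X)$. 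Since $N$ has only two prime factors, this bookkeeping involves a finite list of cases indexed by divisors of $p^aq^b$, and it is the organising device for the rest of the argument.

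Next I would analyse the projection $\pi\colon\ZZ_N\to\ZZ_p=\ZZ_N/p\ZZ_N$. Counting preimages, the multiset convolution identity $\pi(A)\ast\pi(B)=(N/p)\mathbf{1}_{\ZZ_p}$ holds on $\ZZ_p$, and taking DFT gives $\widehat{\pi(A)}(\ell)\widehat{\pi(B)}(\ell)=0$ for every nonzero $\ell\in\ZZ_p$. Irreducibility of $\Phi_p$ over $\QQ$ then implies that if $\widehat{\pi(A)}$ vanishes at any single nontrivial character it vanishes at all of them, so one of $\pi(A),\pi(B)$ must be a constant multiset on $\ZZ_p$; equivalently, one of $A$, $B$ is equidistributed across the cosets of $p\ZZ_N$. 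The symmetric statement for the projection $\ZZ_N\to\ZZ_q$ holds as well.

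The proof then closes the gap by induction on $a+b$. The base case $a+b=1$ is immediate, since $\ZZ_N$ is of prime order and only admits trivial tilings. For the inductive step, suppose for contradiction that none of $A,B$ lies in $p\ZZ_N$ or $q\ZZ_N$. Using the equidistribution above, decompose the equidistributed set, say $A=A_0\sqcup\cdots\sqcup A_{p-1}$ by cosets of $p\ZZ_N$, and shift each $A_i$ back into $p\ZZ_N$. The cyclotomic divisor bookkeeping from the first step couples these shifted pieces with a corresponding decomposition of $B$ into a smaller tiling inside $p\ZZ_N\cong\ZZ_{p^{a-1}q^b}$, a group which still has at most two prime factors. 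The inductive hypothesis therefore applies and yields subgroup containment at the smaller scale; this lifts back to subgroup containment in $\ZZ_N$ and contradicts the assumption.

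The main obstacle I expect is precisely this lifting step: one must verify that the local conclusions on the individual cosets $A_i$ are coordinated across cosets by the global cyclotomic divisor distribution of $A$ and $B$, so that the inductive output in $\ZZ_{p^{a-1}q^b}$ assembles into a genuine coset of $p\ZZ_N$ or $q\ZZ_N$ in $\ZZ_N$, rather than into a set that is merely equidistributed. The two-prime restriction is tight here: Theorem~\ref{exp} in this paper already exhibits, for $N$ with three prime factors of the form $2\cdot 3\cdot 5\cdot p\cdot q$, families of tilings in which no subgroup containment holds, so the reduction would break at the step that tried to pass from a three-prime problem down to a two-prime one.
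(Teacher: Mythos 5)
First, note that the paper does not prove this statement at all: it is quoted from Sands's 1977 paper and used as a black box in Step 6 of the algorithm, so the only meaningful question is whether your argument is a complete proof on its own. It is not. Your first two steps are sound in substance: the tiling identity does force, for each prime power $s\mid N$, that $\Phi_s$ divides exactly one of $A(X)$, $B(X)$ (though your blanket claim of ``exactly one'' for \emph{every} divisor $d>1$ is wrong --- for non-prime-power $d$ one only gets ``at least one'', since $\Phi_d(1)=1$ kills the counting argument and $\Phi_d$ may well divide both factors), and the observation that one of $A,B$ must be equidistributed over the cosets of $p\ZZ_N$ is correct and is just the statement $\Phi_p\mid A(X)$ or $\Phi_p\mid B(X)$. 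The genuine gap is the inductive step, which you yourself flag as ``the main obstacle'': if $A$ is equidistributed mod $p$ and you write $A=A_0\sqcup\cdots\sqcup A_{p-1}$, $B=B_0\sqcup\cdots\sqcup B_{p-1}$ by cosets of $p\ZZ_N$, the tiling restricted to the coset $j+p\ZZ_N$ gives only $\bigsqcup_{i}\left(A_i\oplus B_{j-i}\right)=j+p\ZZ_N$, a convolution identity over $\ZZ_p$ involving \emph{all} the pieces simultaneously. This is a multi-tiling, not a tiling of $p\ZZ_N\cong\ZZ_{N/p}$ by a single pair of sets, so the inductive hypothesis has nothing to apply to. Converting this system into a genuine smaller factorization (or otherwise forcing one full factor into a subgroup) is precisely the content of Sands's theorem; the known proofs need additional machinery, such as the Tijdeman--Sands replacement principle ($A\oplus B=\ZZ_N$ and $\gcd(r,|A|)=1$ imply $rA\oplus B=\ZZ_N$) together with a careful case analysis of which prime-power cyclotomics divide which factor. ``Cyclotomic divisor bookkeeping couples these shifted pieces'' is a placeholder for the entire theorem, not a proof of it.

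Two smaller points. Your base case is fine, but the statement needs the normalization $0\in A\cap B$ (which the paper adopts) for ``contained in the subgroup'' to be literally true rather than true up to translation. And your closing remark about tightness misreads Theorem \ref{exp}: the $N$ there is $2\cdot3\cdot5\cdot p\cdot q$, which has five distinct prime factors, the theorem asserts non-periodicity rather than failure of subgroup containment, and in fact the tile $A$ of that construction (the $3\times 5$ rectangle in the bottom layer of $\ZZ_{3p}\times\ZZ_{5q}\times\ZZ_2$) \emph{is} contained in the index-$2$ subgroup. The two-prime hypothesis in Theorem \ref{th:sands} is indeed essential, but the counterexamples for three primes come from elsewhere in the literature, not from this paper.
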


Assume $|A|$ is contained in a subgroup. Then the subgroup must be
$H_3=\{0,3,6,\dots 144\}$ due to obvious cardinality reasons. Then
$B$ is a union of three parts $B_0$, $B_1$, $B_2$ according to
residues modulo 3. All three parts must have cardinality 3, and
$A\oplus B_0$ gives a tiling of the subgroup $H_3$. Note that $H_3$ is
isomorphic to $\ZZ_{48}$. If $A$ is not periodic then $B_0$ must be
periodic (all tilings of $\ZZ_{48}$ are periodic). Therefore we must
have $B_0=\{0,48,96\}$. Similarly, $B_1$ and $B_2$ must be
periodic by 48, and therefore so is $B$.

Assume now that $B$ is contained in a subgroup. Then the subgroup
must be $H_2=\{0,2,4,\dots, 144\}$. Then $A$ is a union of $A_0$
and $A_1$, the even and odd parts of $A$, and $A_0 \oplus B=H_2$. Note
that $H_2$ is isomorphic to $\ZZ_{72}$. If $B$ is non-periodic then
$A_0$ must be periodic (the only non-periodic tilings of $\ZZ_{72}$
contain tiles of cardinality 6 and 12, while $|B|=9$ and
$|A_0|=8$). In fact, $A_0$ must be periodic by 72, because the
periodicity subgroup contained in $A_0$ must divide the
cardinality $|A_0|=8$. By the same reasoning we obtain that $A_1$
is also periodic by 72, and therefore so is $A$.

\subsubsection*{Summary of  the results for $\ZZ_{144}$.}

\ \\

For the partition $\{\{2, 8, 9, 16\}, \{3, 4\}\}$ there is a set
$T_1$ containing 36 non-periodic tiles of cardinality 24 (type
$A$), and a set $T_2$ containing 6 non-periodic tiles of
cardinality 6 (type $B$). Each tile in $T_1$ tiles with each in
$T_2$. The 36 tiles in $T_1$ fall into two different equivalence
classes corresponding to the following cyclotomic factors:
$\Phi_{\{144, 72, 24, 18, 16, 9, 8, 2\}}$,and $\Phi_{\{144, 72,
18, 16, 9, 8, 2\}}$. The 6 tiles in $T_2$ fall into one
equivalence class corresponding to the cyclotomic factors
$\Phi_{\{48, 36, 24, 12, 6, 4, 3\}}$. We present here one
representative for $T_1$ and $T_2$ (one can then easily recover
the full sets by applying the "fill-out procedure" to these
representatives):
\begin{itemize}
\item
$\{$0, 17, 20, 23, 28, 29, 40, 48, 53, 59, 65, 68, 76,
88, 89, 95, 96, 101, 116, 124, 125, 131, 136, 137$\}$ is in
$T_1$ and
\item
$\{0, 32, 58, 90, 112, 122\}$ is in $T_2$.
\end{itemize}

\vskip 0.5 truecm

For the partition $\{\{4, 9, 16\}, \{2, 3, 8\}\}$ there is a set
$T_1$ containing 6 non-periodic tiles of cardinality 12 (type
$A$), and a set $T_2$ containing 324 non-periodic tiles of
cardinality 12 (type $B$). Each tile in $T_1$ tiles with each in
$T_2$. The 6 tiles in $T_1$ fall into one equivalence class
corresponding to the following cyclotomic factors: $\Phi_{\{144,
36, 18, 16, 9, 4\}}$. The 324 tiles in $T_2$ fall into two
equivalence classes corresponding to the cyclotomic factors
$\Phi_{\{72, 48, 24, 18, 12, 8, 6, 3, 2\}}$, and $\Phi_{\{72, 48,
24, 12, 8, 6, 3, 2\}}$. We present here one representative for
$T_1$ and $T_2$ (one can then easily recover the full sets by
applying the "fill-out procedure" to these representatives):
\begin{itemize}
\item
$\{0, 34, 40, 46, 48, 58, 88, 96, 106, 118, 130, 136\}$
is in $T_1$ and
\item
$\{0, 16, 29, 44, 57, 73, 80, 93, 108, 109,
124, 137\}$ is in $T_2$.
\end{itemize}

\vskip 0.5 truecm

For the partition $\{\{2, 4, 9, 16\}, \{3, 8\}\}$ there is a set
$T_1$ containing 8640 non-periodic tiles of cardinality 24 (type
$A$), and a set $T_2$ containing 3 non-periodic tiles of
cardinality 6 (type $B$). Each tile in $T_1$ tiles with each in
$T_2$. The 8640 tiles in $T_1$ fall into three different
equivalence classes corresponding to the following cyclotomic
factors: $\Phi_{\{144, 36, 18, 16, 12, 9, 4, 2\}}$, $\Phi_{\{144,
36, 18, 16, 9, 6, 4, 2\}}$, and $\Phi_{\{144,
  36, 18, 16, 9, 4, 2\}}$. The 3 tiles in $T_2$ fall into one equivalence
class corresponding to the cyclotomic factors $\Phi_{\{72, 48, 24,
12, 8, 6, 3\}}$ We present here one representative for $T_1$ and
$T_2$ (one can then easily recover the full sets by applying the
"fill-out procedure" to these representatives):
\begin{itemize}
\item
$\{$ 0, 9, 17, 26, 27, 34, 39, 40, 48, 51, 57, 65, 74, 82,
88, 96, 99, 105, 111, 113, 122, 123, 130, 136$\}$ is in
$T_1$ and
\item
$\{0, 36, 64, 80, 100, 116\}$ is in $T_2$.
\end{itemize}

\vskip 0.5 truecm

The partition $\{\{3, 4, 8\}, \{2, 9, 16\}\}$ is the most
interesting. There are sets $T_1$ and $S_1$ containing
respectively 6 and 156 non-periodic tiles of cardinality 12 (type
$A$), corresponding to the cyclotomic factors $\Phi_{\{72, 48, 36,
24, 12, 8, 6, 4, 3\}}$ and $\Phi_{\{72, 48, 24, 12, 8, 6, 4,
3\}}$, respectively. Also, there are sets $T_2$ and $S_2$
containing respectively 12 and 48 non-periodic tiles of
cardinality 12 (type $B$), corresponding to the cyclotomic factors
$\Phi_{\{144, 36, 18, 16, 9, 2\}}$ and $\Phi_{\{144, 18, 16, 9,
2\}}$, respectively. Sets in $T_1$ tile with all sets in both
$T_2$ and $S_2$. Sets in $S_1$ only tile with sets in $T_2$. We
give a representative for each of $T_1, S_1, T_2, S_2$:
\begin{itemize}
\item
$\{0, 18, 28, 44, 54, 64, 80, 82, 98, 108, 118, 134\}$
is in $T_1$,
\item
$\{0, 16, 30, 44, 58, 74, 80, 94, 108, 110, 124, 138\}$
is in $S_1$,
\item
$\{0, 33, 40, 45, 48, 57, 88, 96, 105, 117, 129, 136\}$ is in $T_2$, and
\item
$\{0, 27, 30, 35, 60, 72, 75, 83, 102, 123, 131, 132\}$ is in $S_2$.
\end{itemize}

\vskip 0.5 truecm

The other partitions listed in Step 2 do not produce non-periodic
tilings.


\begin{thebibliography}{1}

\bibitem{aaa}
M. Andreatta, C. Agon, E. Amiot, Tiling problems in music
composition: Theory and Implementation, International Computer
Music Conference, G\"oteborg, 2002, 156--163.

\bibitem{amiot1} E. Amiot, Why rhythmic Canons are interesting,
Perspectives in Mathematical and Computational Music Theory, EpOs,
194--213, 2004.

\bibitem{mor} M. Andreatta, On group-theoretical methods
applied to music: some compositional and implementational aspects.
Perspectives in Mathematical and Computational Music Theory, EpOs,
169--193, 2004.

\bibitem{berger}
R. Berger, The undecidability of the domino problem,
Memoirs of the Amer.\ Math.\ Soc.\ {\bf 66} (1966), 1--72.

\bibitem{coven-meyerowitz}
E. Coven and A. Meyerowitz,
Tiling the integers with translates of one finite set,
J. Algebra {\bf 212} (1999), 1, 161--174.

\bibitem{mora} B. Farkas, M. Matolcsi,
P. M\'ora: On Fuglede's conjecture and the existence of universal
spectra, J. Fourier Anal. Appl., Volume 12, Number 5, (2006),
483--494.

\bibitem{fri1}H. Fripertinger, Tilings Problems in Music Theory,
Perspectives in Mathematical and Computational Music Theory, EpOs,
153--168, 2004.

\bibitem{fri} H. Fripertinger, Classification of rhythmical canons, at\\
\verb+http://www.mathe2.uni-bayreuth.de/frib/canons/canon.html+

\bibitem{fuglede}
B. Fuglede,
Commuting self-adjoint partial differential operators
and a group theoretic problem, J. Funct.\ Anal.\ {\bf 16} (1974), 101--121.

\bibitem{girault-beauquier-nivat}
D. Girault-Beauquier, M. Nivat,
Tiling the plane with one tile, in: {\em Topology and Category Theory in Computer
Science}, G.M. Reed, A.W. Roscoe, R.F. Wachter (eds.), Oxford
Univ. Press 1989, 291--333.

\bibitem{grunbaum-shepard}
B. Gr\"unbaum and G.C. Shepard, {\it Tilings and patterns},
New York: Freeman 1987.

\bibitem{hajos}
G. Haj\'os,
Sur la factorization des groupes ab\'eliens,
Casopis Pest Mat.\ Fys.\ {\bf 74} (1950), 157--162.

\bibitem{kolountzakis-matolcsi}
M.N. Kolountzakis and M. Matolcsi,
Complex Hadamard matrices and the spectral set conjecture,
Collect.\ Math.\ (2006), Vol.\ Extra, 281--291.

\bibitem{konyagin-laba}
S. Konyagin and I. \L aba,
Spectra of certain types of polynomials and tiling of integers
with translates of finite sets,
J. Number Th.\ {\bf 103} (2003),  2, 267--280.

\bibitem{milano}
M.N. Kolountzakis,
The study of translational tiling with Fourier Analysis,
in {\em Fourier Analysis and Convexity},
131--187, Appl.\ Numer.\ Harmon.\ Anal., Birkh\"auser Boston, Boston, MA, 2004.

\bibitem{web}
M.N. Kolountzakis, M. Matolcsi, The documentation of non-periodic
tilings modulo 144,
at \verb+http://fourier.math.uoc.gr/~mk/tiling-computations}+.

\bibitem{long-periods}
M.N. Kolountzakis,
Translational tilings of the integers with long periods,
Electr.\ J. Combinatorics {\bf 10} (2003), 1, R22.

\bibitem{lagarias-wang-1d}
J. C. Lagarias and Y. Wang,
Tiling the line with translates of one tile,
Inventiones Math.\ {\bf 124} (1996), 341--365.

\bibitem{lagarias-wang}
J.C. Lagarias and Y. Wang,
Spectral sets and factorizations of finite abelian groups,
J. Funct.\ Anal.\ {\bf 145} (1997),73--98.

\bibitem{newman}
D.J. Newman,
Tesselations of integers,
J. Number Th.\ {\bf 9} (1977), pp.\ 107--111.

\bibitem{robinson}
R.M. Robinson,
Undecidability and nonperiodicity for tilings of the plane,
Invent.\ Math.\ {\bf 12} (1971), 177--209.

\bibitem{rudin}
W. Rudin,
{\em Fourier analysis on groups},
Reprint of the 1962 original. Wiley Classics Library.
A Wiley-Interscience Publication. John Wiley \& Sons, Inc., New York, 1990.

\bibitem{sands-1}
A.D. Sands,
On the factorization of finite Abelian groups,
Acta Math.\ Acad\ Sci.\ Hungar.\ {\bf 8} (1957), 65--86.

\bibitem{sands-2}
A.D. Sands,
On the factorization of finite Abelian groups II,
Acta Math.\ Acad\ Sci.\ Hungar.\ {\bf 13} (1962), 153--159.

\bibitem{sands}
A.D. Sands, On Keller's conjecture for certain cyclic groups,
Proc. Edinburgh Math. Soc. (2), {\bf 22}, (1977), 17--21.

\bibitem{szegedy}
M. Szegedy,
Algorithms to tile the infinite grid with finite clusters,
In Proceedings of the 39th Annual Symposium on the Foundations of Computer Science 1998, 137--145.

\bibitem{vuza}
D.T. Vuza, Supplementary Sets and Regular Complementary Unending
Canons, Perspectives of New Music, nos 29(2) 22--49; 30(1),
184--207; 30(2), 102--125; 31(1), 270--305. 1991.

\bibitem{wijshoff-van-leeuwen}
H.A.G. Wijshoff and J. van Leeuwen,
Arbitrary versus periodic storage schemes and tesselations of the plane using one type of polyomino,
Information and Control {\bf 62} (1984), 1--25.

\end{thebibliography}
\end{document}